\newtheorem{problem}{Problem}
\newtheorem{theorem}{Theorem}
\newtheorem{assumption}{Assumption}
\newtheorem{lemma}{Lemma}
\newtheorem{remark}{Remark}
\newtheorem{definition}{Definition}
\newtheorem*{proof}{Proof.}
\begin{document}

\begin{frontmatter}

\title{An Approach to Mismatched Disturbance Rejection Control for Uncontrollable Systems\thanksref{footnoteinfo}} 

\thanks[footnoteinfo]{This work was supported by the Foundation for Innovative Research Groups of the National Natural Science Foundation of China (61821004), Major Basic Research of Natural Science Foundation of Shandong Province (ZR2021ZD14), High-level Talent Team Project of Qingdao West Coast New Area (RCTD-JC-2019-05), Key Research and Development Program of Shandong Province (2020CXGC01208), Science and Technology Project of Qingdao West Coast New Area (2019-32, 2020-20, 2020-1-4), National Science Foundation of China under Grant 62103241 and Shandong Provincial Natural Science Foundation under Grant ZR2021QF107, National Science and Technology Major Project (2017-V-0010-0060 and 2017-V-0013-0065), National Natural Science Foundation of China (51506176). Corresponding author Huanshui Zhang. 
}

\author[Paestum]{Shichao Lv}\ead{lv\_sc2020@163.com},    
\author[Paestum]{Hongdan Li}\ead{lhd200908@163.com},               
\author[Baiae]{Kai Peng}\ead{pengkai@nwpu.edu.cn},  
\author[Paestum]{Huanshui Zhang}\ead{hszhang@sdu.edu.cn}  

\address[Paestum]{College of Electrical Engineering and Automation, Shandong University of Science and Technology, Qingdao, Shandong, P.R.China 266590.}  
\address[Baiae]{School of Power and Energy, Northwestern Polytechnical University, Xi'an, Shaanxi, P.R.China 710072.}        

\begin{keyword}                           
Disturbance rejection control; linear quadratic tracking; discrete-time system; mismatched disturbance; uncontrollable system. 
\end{keyword}                             

\begin{abstract}                          
This study focuses on the problem of optimal mismatched disturbance rejection control for uncontrollable linear discrete-time systems. In contrast to previous studies, by introducing a quadratic performance index such that the regulated state can track a reference trajectory and minimize the effects of disturbances, mismatched disturbance rejection control is transformed into a linear quadratic tracking problem. The necessary and sufficient conditions for the solvability of this problem over a finite horizon and a disturbance rejection controller are derived by solving a forward-backward difference equation. In the case of an infinite horizon, a sufficient condition for the stabilization of the system is obtained under the detectable condition. This paper details our novel approach to disturbance rejection. Four examples are provided to demonstrate the effectiveness of the proposed method.
\end{abstract}

\end{frontmatter}

\section{Introduction}
\label{sec:introduction}
With the growing interest in high-precision control, the implementation of disturbance rejection techniques is generally required in controller design. Therefore, disturbance rejection is a fundamental issue in automatic control. The disturbances in the system are classified into matched and mismatched disturbances according to their relationship with the control input. Different disturbance rejection controllers handle disturbances using different schemes. Most studies have focused on matched disturbance rejection through active disturbance rejection control (ADRC) \citep{GUO20132911,4796887,5572931,ZHAO2014882}, disturbance observer-based control \citep{chen2015disturbance,li2014disturbance}, and sliding mode control (SMC) \citep{shtessel2014sliding,young1999control}.

Comparatively, the rejection of mismatched disturbances is more challenging. Mismatched disturbances extensively exist in the real world. Many practical systems such as permanent magnet synchronous motors, roll autopilots for missiles, and flight control systems are affected by mismatched disturbances \citep{chen2003nonlinear,chwa2004compensation,mohamed2007design}. In contrast to matched disturbances, these disturbances act on a system through a different channel than the control input, or the effects of these disturbances cannot be equivalently transformed into input channels. As a result, regardless of the type of control scheme employed, eliminating the influence of mismatched disturbances on the system state may be impossible \citep{isidori1985nonlinear}. Therefore, a practical approach is to eliminate the effects of mismatched disturbances from certain variables of interest representing the regulated state.

There are several methods for handling mismatched disturbances \citep{castillo,chen2016adrc,ginoya2013sliding,2012Generalized,9339876,yang2012nonlinear,6129407}. For a nonlinear system with mismatched disturbances, a novel SMC scheme based on a generalized disturbance observer was presented in \citep{ginoya2013sliding,6129407}. This scheme can reject mismatched disturbances in outputs in the steady state. In contrast to \citep{ginoya2013sliding,6129407}, \citep{yang2012nonlinear} treated mismatched disturbances in a multi-input multi-output system with arbitrary disturbance relative degrees. In a linear system with mismatched disturbances, a generalized extended state observer-based control (GESOBC) method was proposed for linear controllable systems to eliminate mismatched disturbances in the controlled output in the steady state \citep{2012Generalized}. Similar to \citep{2012Generalized}, \citep{castillo} weakened the restriction of disturbances and improved the disturbance rejection effect by introducing high-order derivatives of disturbances.
However, most previous works have focused on controllable systems and these previously proposed methods are not applicable to uncontrollable linear systems.
Additionally, the balance of disturbance rejection and control input energy costs was not considered in the studies mentioned above. In other words, they did not use a minimum cost control input for disturbance rejection control.

In this paper, we discuss mismatched disturbance rejection control for linear discrete-time systems.
The core of this problem is the design of a controller to weaken or eliminate the effects of disturbances on the regulated state.
To this end, we introduce a quadratic performance index such that the regulated state can track a reference trajectory and the effects
of disturbances can be minimized. From this perspective, mismatched disturbance rejection control is transformed into a linear quadratic tracking (LQT) problem. It should be noted that this problem differs from the standard LQT problem based on the presence of mismatched disturbances. To the best of our knowledge, this is a novel method for solving mismatched disturbance rejection control in terms of LQT.

The main contributions of this study can be summarized as follows. For a finite horizon, the necessary and sufficient condition for the existence of a disturbance rejection controller is derived in terms of the Riccati difference equation. Additionally, an analytical expression of the disturbance rejection controller is obtained. By using the decoupling technique, we provide a solution for the forward-backward difference equations (FBDEs) obtained by applying the maximum principle. For an infinite horizon, we provide the sufficient condition of stabilization based on the general algebraic Riccati equation (GARE) with a pseudo-inverse matrix.
Furthermore, the proposed method is extended to receding-horizon control, which can handle known disturbances in real time.
In contrast to \citep{castillo,gandhi2020hybrid,2012Generalized}, where results were obtained with the requirement of uncontrollability, we derived stabilization results under the detectability condition alone. However, the results are still applicable to some uncontrollable cases, which is demonstrated through a numerical example. Furthermore, the effectiveness and feasibility of the proposed method are illustrated through a simulation comparison between the proposed method and proportional-integral-derivative (PID) control in an aero-engine system.

The remainder of this paper is organized as follows. In Section \ref{sec2}, the problem of linear quadratic mismatched disturbance rejection control for discrete-time systems is introduced. Controller design and analysis are presented in Section \ref{sec3}. In Section \ref{sec4}, examples are provided to demonstrate the effectiveness of the proposed method. Our conclusions are summarized in the final section.

Notation: $\mathbb{R}^{n}$ represents the $n$-dimensional Euclidean space; the superscripts $'$, $^{-1}$, $^{\dagger}$, and $\Vert \cdot \Vert$ represent the transpose, inverse, pseudo-inverse, and  2-norm of a matrix, respectively; $I$ denotes the unit matrix; $O$ denotes the zero matrix; a symmetric matrix $M > 0$ ($\ge$ 0) is positive definite (positive semi-definite); and $\rho(\cdot)$ denotes a matrix eigenvalue.




\section{Problem Statement}\label{sec2}
Consider the following linear system with a mismatched disturbance:
\begin{align}\label{f2.1}
	      {x_{k+1}=Ax_{k}+Bu_{k}+Ed_{k}},
\end{align}		
where $x_{k}\in\mathbb{R}^{n}, u_{k}\in\mathbb{R}^{m}$, and $d_{k}\in\mathbb{R}^{m}$ denote the state, control input, and disturbance, respectively. $A\in\mathbb{R}^{n\times n}, B, E\in\mathbb{R}^{n\times m}$ are coefficient matrices.
\begin{remark}
In (\ref{f2.1}), $d_k$ denotes a known disturbance. Known disturbances include measurable disturbances \citep{9050185,GUO201941} and disturbances for which disturbance models are available \citep{yang1994disturbance,10.1115/1.2896180}. The case in which a disturbance is only measurable at the current time can be solved by extending receding-horizon control, as detailed at the end of Section \ref{sec3}.
\end{remark}
\begin{remark}
A mismatched disturbance implies that the following matching conditions cannot be satisfied. The matching condition implies that $B=E$ (or, more precisely, $B\Gamma=E$ for some $\Gamma$) \citep{chen2015disturbance}, which means that disturbances always affect the system through the same channel as the control input or the effects of disturbances can be equivalently transformed into input channels.
\end{remark}
The mismatched disturbance rejection problem for the linear discrete-time system in (\ref{f2.1}) is transformed into an LQT problem such that the regulated state of the system optimally tracks the reference based on compensation for the disturbance.

Here, $c_ox_{k}\in\mathbb{R}^{l}$ represents the regulated state and ${c}_or$ ($r\in\mathbb{R}^{n}$) denotes the reference for the desired regulated state. Therefore, the following function can be obtained:
\begin{align}\label{f6.1}
({c}_ox_{k}-{c}_or)'({c}_ox_{k}-{c}_or)=(x_{k}-r)'{{c}_o}'{c}_o(x_{k}-r),
\end{align}
which expresses the difference between the regulated and reference states.
We define the following cost function:
\begin{align}\label{f2.3}
 J_{N}=&\sum^{N}_{k=0}[(x_{k}-r)'Q(x_{k}-r)\notag\\
&+(Bu_{k}+Ed_{k})'R(Bu_{k}+Ed_{k})]\notag\\
&+(x_{N+1}-r)'P_{N+1}(x_{N+1}-r),
\end{align}
where $Q={{c}_o}'{c}_o$ is consistent with (\ref{f6.1}) and $Q, R, P_{N+1}\in\mathbb{R}^{n\times n}$ are semi-positive definite.
\begin{remark}
The cost function is divided into two parts. The first part $(x_{k}-r)'Q(x_{k}-r)$ represents the error between the regulated and reference states. This ensures that the regulated state tracks the reference. The second part $(Bu_{k}+Ed_{k})'R(Bu_{k}+Ed_{k})$ is the sum of the control input and disturbance, where the goal is to compensate for the disturbance based on the control input.
\end{remark}
\begin{remark}
The proposed cost function can cover both matching and mismatching cases. The invertible matrix $B$ is the simplest case. Clearly, $Bu_{k}+Ed_{k}=0$ has a solution. When $B$ is irreversible, it is necessary to discuss whether a solution to $Bu_{k}+Ed_{k}=0$ exists. When $B\Gamma=E$ for some $\Gamma$ is satisfied, then a solution for $Bu_{k}+Ed_{k}=0$ exists (ADRC is suitable for this type of scenario). Additionally, it is more difficult to handle the case where $\Gamma$ does not satisfy $B\Gamma=E$, which is the main focus of this study.
\end{remark}
\begin{problem}\label{p1}
We must find a control $u_{k}$ such that the effects of the disturbance $d_{k}$ are minimized, that is, $Bu_{k}+Ed_{k}$ is minimized, and the state $x_{k}$ tracks the reference trajectory $r$.
\end{problem}

\section{Controller Design}\label{sec3}
The primary goal of controller design is to develop a stabilizing disturbance rejection controller to force the regulated state of a system to follow the reference. For the linear system in (\ref{f2.1}) with the cost function in (\ref{f2.3}), the LQT problem can be solved using the FBDEs derived from the maximum principle to obtain a disturbance rejection controller.
\subsection{Optimization}

First, the following lemma is provided:

\begin{lemma}\citep{zhang2018optimal}\label{l1}
Problem \ref{p1} is uniquely solvable if and only if the following FBDEs have a unique solution:
\begin{eqnarray}
\left\{
\begin{array}{lll}
0=B'RBu_{k}+B'\lambda_{k}+B'REd_{k}\\
\lambda_{k-1}=Q(x_{k}-r)+A'\lambda_{k}\\
x_{k+1}=Ax_{k}+Bu_{k}+Ed_{k}\\
\lambda_{N}=P_{N+1}(x_{N+1}-r).
\end{array}
\right.\label{f2.10}
\end{eqnarray}
\end{lemma}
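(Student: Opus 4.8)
The plan is to apply the discrete-time maximum principle (equivalently, the Lagrange multiplier method) to the constrained minimization in Problem~\ref{p1}, and then to exploit the convexity of the quadratic cost to upgrade the resulting first-order conditions from merely necessary to necessary-and-sufficient. First I would adjoin the dynamics (\ref{f2.1}) to the cost (\ref{f2.3}) through a costate sequence $\lambda_k$, forming the Lagrangian
\begin{align*}
L = J_N + \sum_{k=0}^{N} \lambda_k'\bigl(Ax_k + Bu_k + Ed_k - x_{k+1}\bigr).
\end{align*}
Setting the gradient of $L$ to zero with respect to the controls $u_k$, to the interior states $x_k$ for $1\le k \le N$, and to the terminal state $x_{N+1}$ reproduces, after absorbing the standard factor of two into $\lambda_k$, exactly the four relations in (\ref{f2.10}): the algebraic optimality condition $0 = B'RBu_k + B'\lambda_k + B'REd_k$, the backward costate recursion $\lambda_{k-1} = Q(x_k - r) + A'\lambda_k$, the forward state recursion, and the transversality condition $\lambda_N = P_{N+1}(x_{N+1} - r)$. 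This establishes that any optimal control sequence must satisfy the FBDEs.

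For the converse and for the equivalence of \emph{uniqueness}, I would note that, with the initial state and the known disturbance sequence fixed, each $x_k$ is an affine function of the stacked control $U = (u_0',\dots,u_N')'$; hence $J_N$ is a \emph{convex} quadratic in $U$ since $Q\ge 0$, $R\ge 0$, and $P_{N+1}\ge 0$. For a convex quadratic, every stationary point is a global minimizer, so any solution of (\ref{f2.10}) yields an optimal control, which gives sufficiency. The decisive observation is that $J_N$ has a unique minimizer if and only if its Hessian in $U$ is nonsingular, and I would show that this nonsingularity is precisely equivalent to the two-point boundary-value system (\ref{f2.10}) being uniquely solvable, by eliminating $\lambda_k$ and $x_k$ from the FBDEs to recover the single gradient equation $\nabla_U J_N = 0$.

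I expect the main obstacle to be this last uniqueness equivalence rather than the derivation of the equations. Because $Q$ and $R$ are only positive \emph{semi}definite, strict convexity is not automatic, so uniqueness of the minimizer is genuine content and must be linked carefully to solvability of the FBDEs: concretely, one verifies that the homogeneous system obtained by setting $r=0$, $d_k=0$ admits only the trivial solution exactly when the cost Hessian is invertible, so that the inhomogeneous system has a unique solution if and only if the minimizer is unique. Once this correspondence between the kernel of the FBDE operator and the kernel of the Hessian is in place, the stated ``if and only if'' follows at once; the detailed matrix bookkeeping I would otherwise defer to \citep{zhang2018optimal}, from which the lemma is quoted.
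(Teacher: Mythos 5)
Your derivation is correct, but note that the paper itself contains no proof of Lemma~\ref{l1}: it is quoted verbatim from \citep{zhang2018optimal}, so the only comparison available is with the technique of that cited reference, which likewise rests on a variational (maximum-principle) argument. Your sketch is a legitimate self-contained substitute, and its structure is sound: the Lagrangian stationarity conditions reproduce the four relations of (\ref{f2.10}) exactly (with the factor of two absorbed into $\lambda_k$), convexity of $J_N$ in the stacked control $U$ upgrades stationarity to global optimality, and the uniqueness equivalence is correctly reduced to the correspondence between the kernel of the FBDE operator and the kernel of the Hessian. Two small points deserve explicit verification in a full write-up. First, the kernel correspondence needs the observation that a homogeneous FBDE solution with $U=0$ is identically zero (with $x_0=0$ and $u_k\equiv 0$ the dynamics force $x_k\equiv 0$, and the backward recursion with $\lambda_N=P_{N+1}x_{N+1}=0$ then forces $\lambda_k\equiv 0$); without this, a nontrivial homogeneous solution might not produce a nonzero kernel vector of the Hessian. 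The converse direction uses the standard identity that, for a homogeneous FBDE solution, summation by parts gives $\sum_k\bigl(x_k'Qx_k+u_k'B'RBu_k\bigr)+x_{N+1}'P_{N+1}x_{N+1}=0$, which together with semidefiniteness places $U$ in the kernel of the Hessian. Second, since $Q$, $R$, $P_{N+1}$ are only semidefinite, the claim ``unique minimizer iff Hessian nonsingular'' must cover the case of a singular Hessian with gradient outside its range (infimum $-\infty$, no minimizer) as well as the case of an affine set of minimizers; both indeed exclude uniqueness, and finite dimensionality of the square linear system (\ref{f2.10}) converts trivial kernel into existence plus uniqueness. With these details filled in, your argument proves the lemma rather than deferring it, which is more than the paper does.
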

\begin{remark}\label{r5101}
From Lemma \ref{l1}, we know that the key to solving Problem \ref{p1} is the solvability of the FBDEs in (\ref{f2.10}).
\end{remark}
We define the following Riccati equation:
\begin{align}
P_{k}=Q+A'P_{k+1}A-M'_{k}\Upsilon^{-1}_{k}M_{k},\label{f2.8}
\end{align}
where
\begin{align}
\Upsilon_{k}&=B'(R+P_{k+1})B,\label{f2.5}\\
M_{k}&=B'P_{k+1}A,\label{f2.6}
\end{align}
with a terminal value of $P_{N+1}$.
\begin{theorem}\label{t1}
  A unique controller $u_{k}$ that minimizes the effect of the disturbance exists if and only if $\Upsilon_{k}>0, k=0,1,\ldots, N$. In this case, the optimal controller is given by
\begin{align}
u_{k}=-\Upsilon^{-1}_{k}M_{k}x_{k}-\Upsilon^{-1}_{k}h_{k},
 \label{f2.4}
\end{align}
where $h_{k}$ satisfies the following backward equation:
\begin{equation}
\left\{
\begin{aligned}
h_{k}=B'&(R+P_{k+1})Ed_{k}+B'f_{k+1},\\
f_{k}=A'&P_{k+1}Ed_{k}+A'f_{k+1}-M'_{k}\Upsilon^{-1}_{k}h_{k}-Qr,\\
f_{N+1}=&-P_{N+1}r.
\end{aligned}
\right.
 \end{equation}

The optimal cost function is given as
\begin{align}
J_{N}=&x'_{0}P_{0}x_{0}+2x'_{0}f_{0}+r'P_{N+1}r\notag\\
&+\sum^{N}_{k=0}[r'Qr+d'_{k}E'(R+P_{k+1})Ed_{k}\label{f2.020}\\
&+2d'_{k}E'f_{k+1}-h'_{k}\Upsilon^{-1}_{k}h_{k}]\notag.
\end{align}
Additionally, the relationship between the state $x_{k}$ and co-state $\lambda_{k}$ is defined as follows:
\begin{align}
\lambda_{k-1}=P_{k}x_{k}+f_{k}. \label{5103}
\end{align}
\end{theorem}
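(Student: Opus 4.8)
The plan is to solve the forward-backward difference equations (\ref{f2.10}) by a decoupling (sweep) argument built on the trial relation (\ref{5103}), $\lambda_{k-1}=P_k x_k+f_k$, and to read off both the controller and the Riccati and $f_k$ recursions by backward induction. First I would fix the terminal data: the condition $\lambda_N=P_{N+1}(x_{N+1}-r)$ written as $\lambda_N=P_{N+1}x_{N+1}+f_{N+1}$ forces $f_{N+1}=-P_{N+1}r$, which is the stated boundary value. Assuming (\ref{5103}) holds one step ahead, i.e. $\lambda_k=P_{k+1}x_{k+1}+f_{k+1}$, I would substitute the dynamics $x_{k+1}=Ax_k+Bu_k+Ed_k$ into the stationarity equation $0=B'RBu_k+B'\lambda_k+B'REd_k$. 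Grouping terms yields $0=\Upsilon_k u_k+M_k x_k+h_k$ with $h_k=B'(R+P_{k+1})Ed_k+B'f_{k+1}$, matching the definition of $h_k$; hence, exactly when $\Upsilon_k$ is invertible, the control is uniquely given by (\ref{f2.4}).

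To close the recursion and confirm the ansatz, I would insert (\ref{f2.4}) into the co-state equation $\lambda_{k-1}=Q(x_k-r)+A'\lambda_k$, using $\lambda_k=P_{k+1}x_{k+1}+f_{k+1}$ and the closed loop $x_{k+1}=(A-B\Upsilon_k^{-1}M_k)x_k-B\Upsilon_k^{-1}h_k+Ed_k$. Matching the coefficient of $x_k$ with $P_k$ in (\ref{5103}) at step $k-1$ and using $M_k'=A'P_{k+1}B$ collapses that coefficient to $Q+A'P_{k+1}A-M_k'\Upsilon_k^{-1}M_k$, which is precisely the Riccati recursion (\ref{f2.8}); the state-independent remainder reduces to the asserted backward equation for $f_k$. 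This verifies (\ref{5103}) for all $k$ and proves sufficiency.

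For necessity of $\Upsilon_k>0$, I would invoke Lemma \ref{l1}: unique solvability of Problem \ref{p1} is equivalent to unique solvability of (\ref{f2.10}). The sweep shows that each stationarity equation reads $\Upsilon_k u_k=-(M_k x_k+h_k)$, so if some $\Upsilon_j$ fails to be invertible this equation is either inconsistent or admits a whole affine family of solutions $u_j$, contradicting uniqueness; conversely, since $P_{k+1}\ge 0$ (propagated backward from $P_{N+1}\ge 0$ once the earlier $\Upsilon$'s are positive) gives $\Upsilon_k=B'(R+P_{k+1})B\ge 0$, invertibility here is the same as $\Upsilon_k>0$. Equivalently, one may note that $J_N$ is a convex quadratic in the control sequence whose Hessian block-diagonalizes through the sweep into $\Upsilon_0,\dots,\Upsilon_N$, so a unique minimizer exists iff every $\Upsilon_k>0$.

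For the optimal cost (\ref{f2.020}) I would use the telescoping identity $\sum_{k=0}^{N}\bigl(x_{k+1}'P_{k+1}x_{k+1}+2x_{k+1}'f_{k+1}-x_k'P_k x_k-2x_k'f_k\bigr)=x_{N+1}'P_{N+1}x_{N+1}+2x_{N+1}'f_{N+1}-x_0'P_0 x_0-2x_0'f_0$, add it to $J_N$, and complete the square in $u_k$ inside each summand. Writing the grouped term $Bu_k+Ed_k$ and using $M_k'=A'P_{k+1}B$, the $u_k$-part becomes $u_k'\Upsilon_k u_k+2u_k'(M_k x_k+h_k)$, so the perfect square $(u_k+\Upsilon_k^{-1}(M_k x_k+h_k))'\Upsilon_k(\cdot)$ vanishes at the optimal (\ref{f2.4}); the $x_k$-quadratic terms cancel against $-M_k'\Upsilon_k^{-1}M_k$ from the Riccati step, and the $x_k$-linear terms cancel through the $f_k$ recursion, leaving exactly the $-h_k'\Upsilon_k^{-1}h_k$ and disturbance/reference contributions in (\ref{f2.020}). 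I expect this last bookkeeping—verifying that all $x_k$-dependent terms cancel so that only the stated data-dependent terms survive—to be the main obstacle, as it is where the Riccati and $f_k$ recursions must be used in concert.
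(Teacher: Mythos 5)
Your sweep derivation of the controller, the Riccati recursion, the backward equations for $h_k$ and $f_k$ (including $f_{N+1}=-P_{N+1}r$), and your telescoping-plus-completion-of-squares treatment of the cost and of sufficiency are essentially the paper's own argument: its proof sets $L_k=x_k'P_kx_k+2x_k'f_k$, computes $L_k-L_{k+1}$, and sums to write $J_N$ as a constant plus $\sum_k(u_k+\Upsilon_k^{-1}M_kx_k+\Upsilon_k^{-1}h_k)'\Upsilon_k(u_k+\Upsilon_k^{-1}M_kx_k+\Upsilon_k^{-1}h_k)$, from which (\ref{f2.4}), (\ref{f2.020}) and sufficiency follow at once. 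The genuine gap is in your necessity direction.

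You argue via Lemma \ref{l1} that if some $\Upsilon_j$ is singular, then the stationarity equation $\Upsilon_ju_j=-(M_jx_j+h_j)$ is inconsistent or admits an affine family of solutions, "contradicting uniqueness" of the solution of (\ref{f2.10}). But non-uniqueness of that single equation at fixed $x_j$ does not transfer to the coupled forward-backward system. Take $j$ largest with $\Upsilon_j$ singular and $0\neq v\in\ker\Upsilon_j$; since $P_{j+1}\geq 0$ gives $(R+P_{j+1})Bv=0$, perturbing $u_j\mapsto u_j+v$ and letting later controls follow the sweep feedback preserves the stationarity conditions at times $k\geq j$ and the terminal condition. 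However, the costate at earlier times shifts by $\delta\lambda_{j-1}=A'P_{j+1}Bv=-A'RBv$, which is nonzero in general, so the stationarity condition $0=B'RBu_{j-1}+B'\lambda_{j-1}+B'REd_{j-1}$ at time $j-1$ is violated: the perturbed tuple is \emph{not} a second solution of (\ref{f2.10}), and no contradiction is reached. (Symmetrically, inconsistency at the realized $x_j$ does not exclude solutions of (\ref{f2.10}) passing through a different $x_j$.) What your kernel perturbation actually contradicts is uniqueness for the \emph{tail} FBDE on $\{j,\dots,N\}$, i.e.\ unique solvability of the subproblem starting at time $j$; using that requires an explicit dynamic-programming reduction to subproblems, which your write-up omits.

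This is precisely why the paper proves necessity by backward induction on the cost: assuming $\Upsilon_k>0$ for $k\geq n+1$ (only then are $P_{n+1}$, $\Upsilon_n$, $h_n$ even well defined --- an ordering your argument also tacitly needs), it evaluates the cost-to-go $\bar J_n$ at $x_n=0$ as $u_n'\Upsilon_nu_n$ plus terms affine in $u_n$, and unique solvability of the subproblem forces $\Upsilon_n$ invertible, hence $\Upsilon_n>0$ since $P_{n+1}\geq 0$ makes $\Upsilon_n\geq 0$. Your closing aside --- that the Hessian of $J_N$ "block-diagonalizes through the sweep" --- is this argument in disguise, but as stated it is circular: the sweep only exists once all $\Upsilon_k$ are invertible, which is what necessity must establish. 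Finally, your parenthetical claim that $P_{k+1}\geq 0$ propagates backward needs justification (the recursion (\ref{f2.8}) subtracts a term); it follows from $P_k=Q+\bar A_k'P_{k+1}\bar A_k+M_k'\Upsilon_k^{-1}B'RB\Upsilon_k^{-1}M_k$ with $\bar A_k=A-B\Upsilon_k^{-1}M_k$, or from the optimal-cost interpretation of $P_k$.
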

\begin{proof}
\textbf{Necessity.}\ \  When Problem \ref{p1} is uniquely solvable, we prove that $\Upsilon_{k}>0, k=0,1,\ldots, N$.

To this end, we define
\begin{align}
\bar{J}_{k}=&\sum^{N}_{i=k}[(x_{i}-r)'Q(x_{i}-r)\notag\\
&+(Bu_{i}+Ed_{i})'R(Bu_{i}+Ed_{i})]\notag\\
&+(x_{N+1}-r)'P_{N+1}(x_{N+1}-r). \label{5104}
\end{align}

For $k=N$, according to the semi-positive definiteness of $Q, R$, and $P_{N+1}$, we obtain
\begin{align}
\bar{J}_{N}=&(x_{N}-r)'Q(x_{N}-r)\notag\\
&+(x_{N+1}-r)'P_{N+1}(x_{N+1}-r)\notag\\
&+(Bu_{N}+Ed_{N})'R(Bu_{N}+Ed_{N})\notag\\
=&u_{N}'\Upsilon_{N}u_{N}+2u_{N}'B'[(R+P_{N+1})Ed_{N}\notag\\
&+P_{N+1}(Ax_{N}-r)]+2(Ax_{N}-r)'P_{N+1}Ed_{N}\notag\\
&+d_{N}'E'(R+P_{N+1})Ed_{N}+(x_{N}-r)'Q(x_{N}-r)\notag\\
&+(Ax_{N}-r)'P_{N+1}(Ax_{N}-r)\notag\\
 \geq&0. \label{5105}
\end{align}

When $x_{N}=0$, based on the unique existence of a solution to Problem \ref{p1}, we find that $\bar{J}_{N}>0$ for any nonzero $u_{N}$. Therefore, $\Upsilon_{N}>0$. From (\ref{f2.10}), we obtain
\begin{align}
0=&B'RBu_{N}+B'\lambda_{N}+B'REd_{N}\notag\\
=&B'RBu_{N}+B'REd_{N}\notag\\
&+B'P_{N+1}[Ax_{N}+Bu_{N}+Ed_{N}-r]\notag\\
=&B'(R+P_{N+1})Bu_{N}+B'P_{N+1}Ax_{N}\notag\\
&+B'(R+P_{N+1})Ed_{N}-B'P_{N+1}r.\label{f2.11}
\end{align}
Therefore,
\begin{align}
u_{N}=-\Upsilon^{-1}_{N}M_{N}x_{N}-\Upsilon^{-1}_{N}h_{N}.
 \label{f2.12}
\end{align}
Additionally,
\begin{align}
\lambda_{N-1}=&Q(x_{N}-r)+A'P_{N+1}[Ax_{N}+Bu_{N}+Ed_{N}-r]\notag\\
=&[Q+A'P_{N+1}A-M'_{N}\Upsilon^{-1}_{N}M_{N}]x_{N}-Qr\notag\\
&+A'P_{N+1}Ed_{N}-M'_{N}\Upsilon^{-1}_{N}h_{N}-A'P_{N+1}r\notag\\
=&P_{N}x_{N}+f_{N}. \label{f2.13}
\end{align}

We use a mathematical inductive method to prove the necessity in the following analysis. Therefore, we assume that for $n+1\leq k \leq N$, the following holds:
\begin{align}
& 1)\ \  \Upsilon_{k}>0; \label{5303}\\
& 2)\ \  u_{k}=-\Upsilon^{-1}_{k}M_{k}x_{k}-\Upsilon^{-1}_{k}h_{k};\label{5304}\\
& 3)\ \  \lambda_{k-1}=P_{k}x_{k}+f_{k}. \label{5305}
\end{align}

We define
\begin{align}
L_{k}=x'_{k}P_{k}x_{k}+2x'_{k}f_{k}. \label{f2.17}
\end{align}
Then,
\begin{align}
L_{k+1}=&x'_{k+1}P_{k+1}x_{k+1}+2x'_{k+1}f_{k+1}\notag\\
=&x'_{k}A'P_{k+1}Ax_{k}+x'_{k}A'P_{k+1}Bu_{k}\notag\\
&+x'_{k}A'P_{k+1}Ed_{k}+u'_{k}B'P_{k+1}Ax_{k}\notag\\
&+u'_{k}B'P_{k+1}Bu_{k}+u'_{k}B'P_{k+1}Ed_{k}\notag\\
&+d'_{k}E'P_{k+1}Ax_{k}+d'_{k}E'P_{k+1}Bu_{k}+d'_{k}E'f_{k+1}\notag\\
&+d'_{k}E'P_{k+1}Ed_{k}+x'_{k}A'f_{k+1}+u'_{k}B'f_{k+1}\notag\\
&+f'_{k+1}Ax_{k}+f'_{k+1}Bu_{k}
+f'_{k+1}Ed_{k}. \label{f2.18}
\end{align}
Therefore, for $n+1\leq k \leq N$, based on the assumption above, we have
\begin{align}
&L_{k}-L_{k+1}\notag\\
=&x'_{k}(P_{k}-A'P_{k+1}A)x_{k}+x'_{k}[f_{k}-A'P_{k+1}Ed_{k}-A'f_{k+1}]\notag\\
&+[f'_{k}-d'_{k}E'P_{k+1}A-f'_{k+1}A]x_{k}
-x'_{k}M'_{k}u_{k}\notag\\
&-u'_{k}M_{k}x_{k}-u'_{k}(\Upsilon_{k}-B'RB)u_{k}-d'_{k}E'P_{k+1}Ed_{k}\notag\\
&-u'_{k}[B'P_{k+1}Ed_{k}+B'f_{k+1}]-d'_{k}E'f_{k+1}-f'_{k+1}Ed_{k}\notag\\
&-[d'_{k}E'P_{k+1}B+f'_{k+1}B]u_{k}\notag\\
=&x'_{k}(Q-M'_{k}\Upsilon^{-1}_{k}M_{k})x_{k}
-x'_{k}[M'_{k}\Upsilon^{-1}_{k}h_{k}+Qr]\notag\\
&-[M'_{k}\Upsilon^{-1}_{k}h_{k}+Qr]'x_{k}
-x'_{k}M'_{k}u_{k}-f'_{k+1}Ed_{k}\notag\\
&-u'_{k}M_{k}x_{k}-u'_{k}(\Upsilon_{k}-B'RB)u_{k}-u'_{k}[h_{k}-B'REd_{k}]\notag\\
&-[h_{k}-B'REd_{k}]'u_{k}-d'_{k}E'P_{k+1}Ed_{k}-d'_{k}E'f_{k+1}\notag\\
=&[(x_{k}-r)'Q(x_{k}-r)+(Bu_{k}+Ed_{k})'R(Bu_{k}+Ed_{k})]\notag\\
&-r'Qr-d'_{k}E'REd_{k}-(u_{k}+\Upsilon^{-1}_{k}M_{k}x_{k}+\Upsilon^{-1}_{k}h_{k})'\Upsilon_{k}\notag\\
&\times(u_{k}+\Upsilon^{-1}_{k}M_{k}x_{k}+\Upsilon^{-1}_{k}h_{k})+h'_{k}\Upsilon^{-1}_{k}h_{k}\notag\\
&-d'_{k}E'P_{k+1}Ed_{k}-d'_{k}E'f_{k+1}-f'_{k+1}Ed_{k}.
\label{f2.19}
\end{align}
By summing (\ref{f2.19}) from $k=n+1$ to $k=N$, we obtain
\begin{align}
\bar{J}_{n+1}=&x'_{n+1}P_{n+1}x_{n+1}+2x'_{n+1}f_{n+1}+r'P_{N+1}r\notag\\
&+\sum^{N}_{k=0}[r'Qr+d'_{k}E'(R+P_{k+1})Ed_{k}\notag\\
&+2d'_{k}E'f_{k+1}-h'_{k}\Upsilon^{-1}_{k}h_{k}].\label{f2.20}
\end{align}

Accordingly, when $x_{n}=0$, we have
\begin{align}
\bar{J}_{n}=&u_{n}'\Upsilon_{n}u_{n}+2u_{n}'B'[(R+P_{n+1})Ed_{n}+f_{n+1}]\notag\\
&+d_{n}'E'[(R+P_{n+1})Ed_{n}+f_{n+1}]\notag\\
&+\sum^{N}_{k=n+1}[r'Qr+d'_{k}E'(R+P_{k+1})Ed_{k}\notag\\
&+2d'_{k}E'f_{k+1}-h'_{k}\Upsilon^{-1}_{k}h_{k}]+r'(Q+P_{N+1})r.\label{f2.21}
\end{align}

Therefore, there is a unique $u_{n}$ that minimizes $\bar{J}_{n}$ only if $\Upsilon_{n}$ is invertible. In this case, according to (\ref{5303}), (\ref{5304}), and (\ref{5305}), by following the processes of (\ref{f2.11}) and (\ref{f2.13}), the optimal controller $u_{n}$ is defined in (\ref{f2.4}) and $\lambda_{n-1}$ is equivalent to (\ref{5103}) with $k=n$. By induction, the proof of necessity for all $0\leq k \leq N$ is complete.

\textbf{Sufficiency.}\ \ For (\ref{f2.19}), performing the summation from $k=0$ to $N$ yields 
\begin{align}
J_{N}=&x'_{0}P_{0}x_{0}+2x'_{0}f_{0}+r'P_{N+1}r\notag\\
&+\sum^{N}_{k=0}[r'Qr+d'_{k}E'REd_{k}-h'_{k}\Upsilon^{-1}_{k}h_{k}\notag\\
&+(u_{k}+\Upsilon^{-1}_{k}M_{k}x_{k}+\Upsilon^{-1}_{k}h_{k})'\Upsilon_{k}\notag\\
&\times(u_{k}+\Upsilon^{-1}_{k}M_{k}x_{k}+\Upsilon^{-1}_{k}h_{k})\notag\\
&+d'_{k}E'P_{k+1}Ed_{k}+d'_{k}E'f_{k+1}+f'_{k+1}Ed_{k}].
\label{f2.22}
\end{align}

Based on the positive definiteness of $\Upsilon_{k}$ for $0\leq k \leq N$, it is easy to deduce the unique solvability of Problem \ref{p1}.

\end{proof}

\begin{remark}\label{rem.3}
It is noteworthy that the controller proposed in Theorem \ref{t1} can optimally eliminate disturbances in the regulated state. To the best of our knowledge, this result is novel.
\end{remark}

\begin{remark}\label{r6}
The explicit expressions for $h_{k}$ and $f_{k}$ are given as follows:
\begin{align}
h_{k}=&H_{k}d_{k}+B'\sum^{N}_{s=k+1}\bar{A}^{N-1}_{s}F_{s}d_{s}
-\mathcal{R}_{k+1}r,\label{51101} \\
f_{k}=&\sum^{N}_{s=k}\bar{A}^{N-1}_{s}F_{s}d_{s}-\mathcal{R}_{k}r, \label{51102}
\end{align}
where
\begin{align}
H_{k}=&B'(R+P_{k+1})E,\notag\\
\bar{A}^{N-1}_{s}=&\bar{A}_{s}'\ldots \bar{A}_{N-1}', \ \ \bar{A}_{k}=A-B\Upsilon^{-1}_{k}M_{k},\notag\\
F_{k}=&(\bar{A}_{k}'P_{k+1}-M'_{k}\Upsilon^{-1}_{k}B'R)E,\notag\\
\mathcal{R}_{k}=&\bar{A}_{k}'\mathcal{R}_{k+1}+Q, \ \  \mathcal{R}_{N+1}=P_{N+1}.\notag
\end{align}

\end{remark}

Next, we analyze the stability of the designed controller in (\ref{f2.4}).
\subsection{Stabilization}
First, we denote $P_{k}, M_{k}$, and $\Upsilon_{k}$ with the terminal time $N$ as $P_{k}(N), M_{k}(N)$, and $\Upsilon_{k}(N)$, respectively. Under the regular condition that
\begin{align}
\Upsilon_{k}(N)\Upsilon^{\dagger}_{k}(N)M_{k}(N)=M_{k}(N),\label{5600}
\end{align}
we introduce the general difference Riccati equation (GDRE) with
\begin{align}
P_{k}(N)=&Q+A'P_{k+1}(N)A-M'_{k}(N)\Upsilon^{\dagger}_{k}(N)M_{k}(N),\label{5501}
\end{align}
where
\begin{align}
\Upsilon_{k}(N)=&B'(R+P_{k+1}(N))B,\label{5502}\\
M_{k}(N)=&B'P_{k+1}(N)A,\label{5503}
\end{align}
with the terminal value $P_{N+1}=0$.

\begin{lemma}\label{l5}
Suppose that the GDRE in (\ref{5501})--(\ref{5503}) yields a solution. Then, Problem \ref{p1} has a solution expressed as
\begin{align}
u_{k}=-\Upsilon^{\dagger}_{k}(N)M_{k}(N)x_{k}-\Upsilon^{\dagger}_{k}(N)h_{k}(N),
 \label{5504}
\end{align}
where $h_{k}(N)$ satisfies the following backward equation:
\begin{equation}
\left\{
\begin{aligned}
h_{k}=B'&(R+P_{k+1})Ed_{k}+B'f_{k+1},\\
f_{k}=A'&P_{k+1}Ed_{k}+A'f_{k+1}-M'_{k}\Upsilon^{-1}_{k}h_{k}-Qr,\\
f_{N+1}=&-P_{N+1}r.
\end{aligned}\label{5505}
\right.
 \end{equation}
The optimal cost function is given as
\begin{align}
J_{N}=&x'_{0}P_{0}(N)x_{0}+2x'_{0}f_{0}(N)\notag\\
&+\sum^{N}_{k=0}[r'Qr+d'_{k}E'(R+P_{k+1}(N))Ed_{k}\notag\\
&+2d'_{k}E'f_{k+1}(N)-h'_{k}\Upsilon^{\dagger}_{k}(N)h_{k}(N)].\label{5506}
\end{align}
\end{lemma}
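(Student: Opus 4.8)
The plan is to follow the sufficiency argument of Theorem~\ref{t1} verbatim, replacing the inverse $\Upsilon^{-1}_k$ by the pseudo-inverse $\Upsilon^{\dagger}_k(N)$ throughout and invoking the regular condition (\ref{5600}) at each point where invertibility was previously used. Since Lemma~\ref{l5} claims only the \emph{existence} of a solution and not its uniqueness (uniqueness genuinely fails when $\Upsilon_k(N)$ is singular, because any shift of $u_k$ by a vector in $\ker\Upsilon_k(N)$ leaves the cost unchanged), it suffices to produce a completion-of-squares identity showing that the stated $u_k$ attains the claimed cost and that no admissible control does better. The formula in (\ref{5504}) is then simply the minimum-norm minimizer.

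First I would set $L_k = x'_k P_k(N)x_k + 2x'_k f_k(N)$ as in (\ref{f2.17}) and compute $L_k - L_{k+1}$ by substituting the dynamics (\ref{f2.1}), the GDRE (\ref{5501})--(\ref{5503}), and the backward recursions (\ref{5505}) for $h_k$ and $f_k$. Exactly as in (\ref{f2.19}), the terms carrying $u_k$ collapse to $-u'_k\Upsilon_k(N)u_k - 2u'_k g_k$ after the running cost is split off, where $g_k := M_k(N)x_k + h_k(N)$. The aim is to rewrite this as $-(u_k + \Upsilon^{\dagger}_k(N)g_k)'\Upsilon_k(N)(u_k + \Upsilon^{\dagger}_k(N)g_k) + g'_k\Upsilon^{\dagger}_k(N)g_k$. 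Using the Moore--Penrose identities $\Upsilon_k\Upsilon^{\dagger}_k\Upsilon_k = \Upsilon_k$ and $\Upsilon^{\dagger}_k\Upsilon_k\Upsilon^{\dagger}_k = \Upsilon^{\dagger}_k$, together with the symmetry of $\Upsilon_k(N)$ (so that $\Upsilon^{\dagger}_k$ is symmetric and $\Upsilon_k\Upsilon^{\dagger}_k = \Upsilon^{\dagger}_k\Upsilon_k$ is the orthogonal projector onto $\mathrm{range}(\Upsilon_k)$), a direct expansion shows this completion is exact \emph{iff} $\Upsilon_k(N)\Upsilon^{\dagger}_k(N)g_k = g_k$; otherwise residual terms $u'_k(I-\Upsilon_k\Upsilon^{\dagger}_k)g_k$ survive. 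The $x_k$-component of this condition is exactly the regular condition (\ref{5600}), which gives $\Upsilon_k\Upsilon^{\dagger}_k M_k = M_k$ for every $x_k$.

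The delicate and most important step is the analogous range membership for the forcing term, $\Upsilon_k(N)\Upsilon^{\dagger}_k(N)h_k(N) = h_k(N)$. Here I would exploit the structure $h_k = B'[(R+P_{k+1})Ed_k + f_{k+1}]$ and $\Upsilon_k = B'(R+P_{k+1})B$: factoring the semidefinite matrix $R+P_{k+1} = W^{1/2}W^{1/2}$ gives $\mathrm{range}(\Upsilon_k) = \mathrm{range}(B'W^{1/2})$, so that $B'(R+P_{k+1})Ed_k = B'W^{1/2}(W^{1/2}Ed_k)$ automatically lies in $\mathrm{range}(\Upsilon_k)$, while the remaining contribution $B'f_{k+1}$ must be controlled by backward induction through the recursion (\ref{5505}). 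This is precisely where the hypothesis that the GDRE yields a solution is indispensable: if $g_k$ had a nonzero component in $\ker\Upsilon_k(N)$, then taking $u_k$ large along that component would send the running cost to $-\infty$, contradicting solvability. I expect this range argument for $h_k(N)$ to be the main obstacle of the proof.

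Finally, summing the completed-square form of $L_k - L_{k+1}$ from $k=0$ to $N$ and telescoping yields, since the GDRE carries the terminal value $P_{N+1}=0$ (so the $r'P_{N+1}r$ term of (\ref{f2.020}) disappears),
\begin{align}
J_N = {}&x'_0 P_0(N)x_0 + 2x'_0 f_0(N) \notag\\
&+ \sum_{k=0}^{N}\big[r'Qr + d'_k E'(R+P_{k+1}(N))Ed_k \notag\\
&\quad + 2d'_k E'f_{k+1}(N) - h'_k\Upsilon^{\dagger}_k(N)h_k(N)\big] \notag\\
&+ \sum_{k=0}^{N}\big(u_k + \Upsilon^{\dagger}_k(N)g_k\big)'\Upsilon_k(N)\big(u_k + \Upsilon^{\dagger}_k(N)g_k\big). \notag
\end{align}
Because each $\Upsilon_k(N)\ge 0$, the last sum is nonnegative and vanishes at $u_k = -\Upsilon^{\dagger}_k(N)M_k(N)x_k - \Upsilon^{\dagger}_k(N)h_k(N)$; hence this control minimizes $J_N$, attains the cost (\ref{5506}), and solves Problem~\ref{p1}, completing the argument.
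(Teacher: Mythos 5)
Your overall strategy---rerunning the sufficiency half of Theorem~\ref{t1} with $\Upsilon^{-1}_{k}$ replaced by $\Upsilon^{\dagger}_{k}(N)$ under the regular condition (\ref{5600})---is exactly what the paper does: its proof of Lemma~\ref{l5} is a one-line appeal to Theorem~\ref{t1}. To your credit, you noticed what that one-liner hides: the completion of squares requires $\Upsilon_{k}\Upsilon^{\dagger}_{k}g_{k}=g_{k}$ for the \emph{whole} vector $g_{k}=M_{k}(N)x_{k}+h_{k}(N)$, whereas (\ref{5600}) only supplies the $M_{k}(N)x_{k}$ part; and your observation that $B'(R+P_{k+1})Ed_{k}\in\mathrm{range}(\Upsilon_{k})$ (via $\mathrm{range}(B'WB)=\mathrm{range}(B'W^{1/2})$ for $W=R+P_{k+1}\ge 0$) correctly disposes of half of $h_{k}$.

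The gap is in the remaining piece $B'f_{k+1}$, in two respects. First, the backward induction you defer to is never carried out, and it is not routine: one would need something like $f_{k+1}\in\mathrm{range}(R+P_{k+1})+\ker B'$, which does not propagate cleanly through the recursion (\ref{5505}) because of the $A'f_{k+2}$ and $M'_{k+1}\Upsilon^{\dagger}_{k+1}h_{k+1}$ terms. Second, your fallback argument---``a kernel component would let $u_{k}$ drive the cost to $-\infty$, contradicting solvability''---is circular as stated: solvability of Problem~\ref{p1} is precisely the \emph{conclusion} of Lemma~\ref{l5}, not a hypothesis. The non-circular version of your own idea closes the gap. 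Since $Q,R,P_{N+1}\ge 0$, the cost $J_{N}$ is a sum of nonnegative quadratic forms, so $J_{N}\ge 0$ holds unconditionally for every control sequence. Keep the residual term in your telescoped identity; by (\ref{5600}) it equals $2u'_{k}(I-\Upsilon_{k}\Upsilon^{\dagger}_{k})h_{k}$ and does not involve $x_{k}$, because $h_{k}$ is generated by the backward recursion independently of the controls. Now suppose $(I-\Upsilon_{k}\Upsilon^{\dagger}_{k})h_{k}\neq 0$ for some $k$ and let $k^{*}$ be the largest such index. Choosing $u_{j}=0$ for $j<k^{*}$, $u_{k^{*}}=-t(I-\Upsilon_{k^{*}}\Upsilon^{\dagger}_{k^{*}})h_{k^{*}}$, and $u_{j}=-\Upsilon^{\dagger}_{j}g_{j}$ for $j>k^{*}$ makes every quadratic term bounded in $t$ (the $k^{*}$ perturbation lies in $\ker\Upsilon_{k^{*}}$) and every residual vanish except at $k^{*}$, where it equals $-2t\|(I-\Upsilon_{k^{*}}\Upsilon^{\dagger}_{k^{*}})h_{k^{*}}\|^{2}\to-\infty$, contradicting $J_{N}\ge 0$. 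Hence $h_{k}\in\mathrm{range}(\Upsilon_{k})$ automatically, and with that established the rest of your completion-of-squares argument and the cost formula (\ref{5506}) go through.
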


\begin{proof}
The proof of Theorem \ref{t1} can be derived similarly under the regular condition (\ref{5600}). To avoid redundancy, we have omitted this proof here.
\end{proof}

In this section, we consider the cost function over the infinite horizon as follows:
\begin{align}\label{5401}
 J=&\lim_{N\rightarrow \infty}\frac{1}{N}\sum^{N}_{k=0}[(x_{k}-r)'Q(x_{k}-r)\notag\\
&+(Bu_{k}+Ed_{k})'R(Bu_{k}+Ed_{k})].
\end{align}

We now introduce certain definitions and assumptions.
\begin{definition}\label{d562}
The system $(A,Q^{\frac{1}{2}})$
\begin{equation}
\left\{
\begin{aligned}
x_{k+1}=&Ax_{k},\\
y_{k}=&Q^{\frac{1}{2}}x_{k}
\end{aligned}
\right.\label{5603}
 \end{equation}
is detectable if for any $N\geq 0$, the following holds:
\begin{eqnarray*}
y_{k}=0, \forall 0\leq k \leq N \Rightarrow \lim_{k\rightarrow \infty} x_{k}=0.
\end{eqnarray*}
\end{definition}
\begin{assumption}\label{a51}
$d_{k}, k\geq 0$ is bounded and $\lim_{k\rightarrow \infty}d_{k}=d.$
\end{assumption}
\begin{assumption}\label{a52}
 $(A,\sqrt{Q})$ is detectable.
\end{assumption}
We define the GARE as follows:
\begin{align}
P=&Q+A'PA-M'\Upsilon^\dag M,\label{5403}
\end{align}
where
\begin{align}
M=&B'PA,\label{5404}\\
\Upsilon=&B'(R+P)B.\label{5405}
\end{align}

Now, consider the following system without disturbances:
\begin{align}
x_{k+1}=Ax_{k}+Bu_{k}. \label{5601}
\end{align}
Additionally, consider the following cost function:
\begin{align}
\bar{J}=\sum^{\infty}_{k=0}[x_{k}'Qx_{k}+u_{k}'B'RBu_{k}],\label{5604}
\end{align}
 where $Q$ and $R$ are both semi-positive definite.

Recall the problem of finding $u_{k}$ to stabilize the system (\ref{5601}) and minimize the cost function (\ref{5604}), which can be expressed as follows.
\begin{lemma}\label{l56}\citep{zhang2018optimal}
Suppose Assumption \ref{a52} holds and the system in (\ref{5601}) can be stabilized if and only if the GDRE in (\ref{5401}) converges when $N\rightarrow \infty$, that is, $\lim_{N\rightarrow \infty}P_{k}(N)=P$. Furthermore, $P$ is a solution of the GARE in (\ref{5403})–-(\ref{5405}) and $P\geq 0$. In this case, the stabilizing controller is
\begin{align}
u_{k}=-\Upsilon^{\dagger}Mx_{k}, \label{5410}
\end{align}
and the optimal cost value is
\begin{align}
\bar{J}^{\ast}=x_{0}'Px_{0}. \label{5411}
\end{align}
\end{lemma}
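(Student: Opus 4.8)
The plan is to establish Lemma~\ref{l56} as a standard infinite-horizon LQR result, adapted to the degenerate cost structure $u_k'B'RBu_k$ in which only $Bu_k$ (not $u_k$ itself) is penalized, which forces the use of the pseudo-inverse and the regular condition \eqref{5600}. The overall strategy is to pass to the limit in the finite-horizon theory already developed (Theorem~\ref{t1} and Lemma~\ref{l5}), showing that the sequence $P_k(N)$ is monotone and bounded, hence convergent, and that its limit $P$ satisfies the GARE.

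First I would fix the terminal condition $P_{N+1}=0$ and reindex so that $P_0(N)$ denotes the cost-to-go from time $0$ over horizon $N$; by time-invariance this equals $P_{N}(N+1)$ evaluated from a shifted start, so monotonicity in $N$ reduces to comparing optimal costs over nested horizons. Since the per-stage cost $x_k'Qx_k + u_k'B'RBu_k$ is nonnegative, appending one more stage can only increase the optimal cost-to-go, giving $P_0(N) \le P_0(N+1)$, i.e. the sequence is monotonically nondecreasing in the positive-semidefinite order. The detectability Assumption~\ref{a52} supplies the upper bound: I would exhibit a stabilizing control (guaranteed to exist precisely when the system is stabilizable, which is the ``if'' direction hypothesis) whose infinite-horizon cost is finite, so that $\bar J^\ast = x_0'P_0(N)x_0 \le x_0'Px_0^{\mathrm{stab}}$ is bounded uniformly in $N$. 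A monotone bounded sequence of symmetric matrices converges, yielding $\lim_{N\to\infty}P_k(N)=P$, and passing to the limit in the GDRE \eqref{5501}--\eqref{5503} shows $P$ solves the GARE \eqref{5403}--\eqref{5405}; nonnegativity $P\ge 0$ is inherited from $P_0(N)\ge 0$.

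For the controller and cost value, I would substitute $u_k=-\Upsilon^\dagger M x_k$ into the closed-loop dynamics and verify via the completion-of-squares identity underlying \eqref{f2.22} (specialized to $r=0$, $d_k=0$) that $\bar J = x_0'Px_0 + \sum_k (u_k+\Upsilon^\dagger M x_k)'\Upsilon(u_k+\Upsilon^\dagger M x_k)$, so the stated controller is optimal and achieves $\bar J^\ast = x_0'Px_0$. The remaining piece is the equivalence ``stabilizable $\iff$ GDRE converges.'' The forward direction follows from the boundedness argument above; for the converse I would argue that convergence of $P_k(N)$ makes the stationary feedback $\bar A = A - B\Upsilon^\dagger M$ the closed-loop matrix, and then invoke detectability through a Lyapunov argument: using $L_k=x_k'Px_k$ as a candidate Lyapunov function, the difference $L_{k+1}-L_k = -x_k'Qx_k - u_k'B'RBu_k \le 0$ shows $Q^{1/2}x_k\to 0$, and detectability (Definition~\ref{d562}) then forces $x_k\to 0$, establishing stability of the closed loop.

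The main obstacle is the degeneracy introduced by penalizing $Bu_k$ rather than $u_k$: the matrix $\Upsilon = B'(R+P)B$ may be singular, so the regular condition \eqref{5600} must hold in the limit to guarantee that $\Upsilon^\dagger M$ is a genuine feedback gain making the Riccati recursion consistent (i.e. $\Upsilon\Upsilon^\dagger M = M$). I would need to verify that this range condition is preserved under the limit $N\to\infty$, which is delicate because pseudo-inverses are not continuous at points where the rank drops; the cleanest route is to check that the relevant subspace $\mathrm{Im}(M)\subseteq \mathrm{Im}(\Upsilon)$ is stable along the convergent sequence, or to appeal directly to the cited result \citep{zhang2018optimal} for the singular LQR machinery and restrict my argument to the limiting manipulations. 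Because the statement attributes the lemma to \citep{zhang2018optimal}, the expected proof is essentially a citation together with the monotone-convergence and Lyapunov-detectability steps sketched above, rather than a from-scratch derivation of the singular Riccati theory.
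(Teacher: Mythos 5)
The paper offers no proof of this lemma at all: it is imported verbatim from the cited reference \citep{zhang2018optimal}, exactly as you anticipate in your closing paragraph, so your sketch does work the paper never attempts. As a reconstruction it follows the standard singular-LQ route and its main steps are sound: monotonicity of $P_{0}(N)$ from nonnegative stage costs and zero terminal weight, a uniform bound supplied by any stabilizing feedback, convergence of the monotone bounded matrix sequence, passage to the limit in the GDRE, and closed-loop stability via the Lyapunov identity
\begin{align}
x_{k+1}'Px_{k+1}-x_{k}'Px_{k}=-x_{k}'Qx_{k}-u_{k}'B'RBu_{k},\notag
\end{align}
combined with detectability. Two points deserve flagging. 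First, a mislabeling: the uniform upper bound on $P_{k}(N)$ comes from the stabilizability hypothesis, not from detectability (Assumption \ref{a52}); detectability enters only in the converse direction (GDRE convergence implies the closed loop is stable), where you do in fact use it correctly. Second, the delicate issues you yourself identify --- discontinuity of the pseudo-inverse when rank drops in the limit $N\rightarrow\infty$, preservation of the regular condition $\Upsilon\Upsilon^{\dagger}M=M$, and the fact that $u_{k}'B'RBu_{k}\rightarrow 0$ with $R$ merely semi-definite does not by itself give $Bu_{k}\rightarrow 0$ in the detectability argument --- are precisely the technical content that the cited reference resolves and that a sketch at this level cannot; deferring them to \citep{zhang2018optimal} is the right call and is, implicitly, what the paper itself does by stating the lemma as a citation. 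In short, your proposal is a correct standard outline and is strictly more informative than the paper's treatment, which consists of the citation alone.
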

Based on the existence of the disturbance $d_{k}$, we consider the boundedness of $h_{k}$ in the following lemma.
\begin{lemma}\label{l5111}
Let Assumptions \ref{a51} and \ref{a52} hold true. When $N\rightarrow \infty$, the GDRE in (\ref{5401}) converges, that is, $\lim_{N\rightarrow \infty}P_{k}(N)=P$, and there is a constant $G>0$ such that $\|h_{k}(N)\|$ in (\ref{5505}) is bounded.
The explicit expressions for $h_{k}$ and $f_{k}$ are given as follows:
\begin{align}
h_{k}=&H_{k}d_{k}+B'\sum^{N}_{s=k+1}\bar{A}^{N-1}_{s}F_{s}d_{s}
-\mathcal{R}_{k+1}r,\label{51101} \\ 
f_{k}=&\sum^{N}_{s=k}\bar{A}^{N-1}_{s}F_{s}d_{s}-\mathcal{R}_{k}r, \label{51102}
\end{align}
where
\begin{align}
\bar{A}^{N-1}_{s}=&\bar{A}_{s}'\ldots \bar{A}_{N-1}'\notag.
\end{align}
\end{lemma}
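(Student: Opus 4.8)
The plan is to treat the two assertions separately: the convergence of the GDRE, which I would obtain directly from the cited result, and the uniform boundedness of $h_k(N)$, which is the substantive part. First I would invoke Lemma~\ref{l56} to record that under Assumption~\ref{a52} the GDRE converges, $\lim_{N\to\infty}P_k(N)=P$ with $P\ge 0$ solving the GARE (\ref{5403})--(\ref{5405}), and that the stabilizing controller $u_k=-\Upsilon^{\dagger}Mx_k$ renders the closed-loop matrix $\bar A=A-B\Upsilon^{\dagger}M$ Schur, i.e.\ $\rho(\bar A)<1$. From convergence I would extract the facts needed afterwards: the families $P_k(N)$, $\Upsilon_k(N)$, $M_k(N)$ are uniformly bounded in $k$ and $N$, hence $H_k=B'(R+P_{k+1})E$ and $F_k=(\bar A_k'P_{k+1}-M_k'\Upsilon_k^{-1}B'R)E$ are uniformly bounded, and $\bar A_k=A-B\Upsilon_k^{-1}M_k\to\bar A$.

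Next I would use the explicit formula (\ref{51101}) and bound its three terms. The first term is immediate: $\|H_kd_k\|\le\|H_k\|\,\|d_k\|$ is bounded because $H_k$ is bounded and $d_k$ is bounded by Assumption~\ref{a51}. The decisive contribution is the middle term $B'\sum_{s=k+1}^{N}\bar A_s^{N-1}F_sd_s$, whose size is governed by the transition products $\bar A_s^{N-1}=\bar A_s'\cdots\bar A_{N-1}'$. Since $\rho(\bar A)<1$, I would fix a submultiplicative norm in which $\|\bar A\|\le\gamma<1$ and pick $\gamma<\gamma_0<1$; because $\bar A_j\to\bar A$ there is a threshold $J$, independent of $N$, such that every factor except the last $J$ (those nearest the terminal, where $P_{s+1}(N)$ is still close to the terminal value and $\bar A_s$ is only close to $A$) satisfies $\|\bar A_s\|\le\gamma_0$. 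The at most $J$ exceptional factors are each bounded by a uniform constant, so their product contributes only a fixed multiplicative factor, while the remaining factors give geometric decay, yielding $\|\bar A_s^{N-1}\|\le C\gamma_0^{\,N-s}$ uniformly in $N$. Summing, $\sum_{s=k+1}^{N}\|\bar A_s^{N-1}\|\,\|F_s\|\,\|d_s\|$ is dominated by a convergent geometric series whose bound is independent of $N$ and $k$, so this term is uniformly bounded.

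For the third term I would unroll the backward recursion $\mathcal R_k=\bar A_k'\mathcal R_{k+1}+Q$, $\mathcal R_{N+1}=0$, into $\mathcal R_k=\sum_{j=k}^{N}(\bar A_k'\cdots\bar A_{j-1}')Q$, and bound the transition products by exactly the same estimate as above; this shows $\mathcal R_k$, hence $\|\mathcal R_{k+1}r\|$, is uniformly bounded. Combining the three estimates gives a constant $G>0$, independent of $N$ and $k$, with $\|h_k(N)\|\le G$, which is the assertion; the boundedness of $f_k(N)$ in (\ref{51102}) then follows verbatim from the same product estimate.

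I expect the main obstacle to be precisely the decay estimate for the products $\bar A_s^{N-1}$: the naive hope that every factor is contractive fails, because the factors nearest the terminal are close to $A$ (possibly unstable) rather than to the Schur matrix $\bar A$. The key observation is that \emph{only a bounded number} of near-terminal factors can be non-contractive---this count is controlled by the convergence rate of the GDRE and is independent of the horizon $N$---so their product contributes a fixed constant while the overwhelming majority of factors supply uniform geometric decay. Making this separation rigorous (choosing the norm adapted to $\bar A$, fixing $J$ uniformly in $N$, and absorbing the exceptional factors into the constant) is the technical heart of the argument.
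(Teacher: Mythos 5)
Your proof is correct, and its skeleton is the same as the paper's (the paper proves this lemma only implicitly, inside and just after the proof of Theorem~\ref{t2} rather than as a standalone proof): invoke Lemma~\ref{l56} for $\lim_{N\rightarrow\infty}P_{k}(N)=P$ and $\rho(A-B\Upsilon^{\dagger}M)<1$, deduce from convergence uniform constants $\mathcal{C}_{H},\mathcal{C}_{F},\mathcal{C}_{\mathcal{R}}$ bounding $H_{k}$, $F_{k}$, $\mathcal{R}_{k}$, and then bound the series in (\ref{51101}) geometrically. The genuine difference is at the step you call the technical heart, and there your version is the rigorous one. The paper estimates $\|\sum_{s=k+1}^{\infty}\bar{A}^{s}F_{s}d_{s}\|\leq\mathcal{C}_{F}\bar{d}\sum_{s=k+1}^{\infty}\|\bar{A}\|^{s}$, which silently (i) replaces the time-varying transition product $\bar{A}_{s}^{N-1}=\bar{A}_{s}'\cdots\bar{A}_{N-1}'$ by powers of the limit matrix $\bar{A}$, and (ii) treats $\rho(\bar{A})<1$ as if it gave $\|\bar{A}\|<1$. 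Neither substitution is justified as written; in particular, because the terminal value is $P_{N+1}=0$, the factors $\bar{A}_{s}(N)$ with $s$ near $N$ are close to $A$ (possibly unstable), not to the Schur matrix $\bar{A}$ --- exactly the failure mode you identify. Your two-regime estimate (a norm adapted to $\bar{A}'$, a horizon-independent threshold $J$, at most $J$ non-contractive near-terminal factors absorbed into the constant, geometric decay from the remaining factors) is precisely what is needed to make the paper's inequality true, and it carries over verbatim to $\mathcal{R}_{k}$ and $f_{k}$, as you note. Two points to make explicit in a write-up: the uniformity of $J$ in $N$ rests on time-invariance of the coefficients ($P_{k}(N)$ depends only on $N-k$), which you allude to but should state outright; and since the lemma also asserts the closed forms (\ref{51101})--(\ref{51102}), a complete proof should verify them by backward induction on the recursion (\ref{5505}) --- the paper itself only states them in Remark~\ref{r6}, and you take them as given.
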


Based on the above preliminaries, we present the main results in this section.
{\theorem \label{t2} \ \
Let Assumptions \ref{a51} and \ref{a52} hold true. If the GARE in (\ref{5403})-(\ref{5405}) has a semi-positive definite solution $P$, then the system in (\ref{f2.1}) is bounded and can be stabilized.
Under such conditions, the optimal stabilizing solution can be derived as
\begin{align}
u_{k}=-\Upsilon^{\dagger}Mx_{k}-\Upsilon^{\dagger}h_{k}.
 \label{5406}
\end{align}
}\\
\begin{proof} \ Suppose the GARE in (\ref{5403})-(\ref{5405}) has a solution $P\geq 0$. We will prove the bounded stabilization of the system in (\ref{f2.1}).

First, we demonstrate the boundedness of $h_{k}$. From Lemma \ref{l5}, $\rho(A-B\Upsilon^{\dagger}M)<1$.
Considering the convergence of $P_{k}(N)$, we can find that $H_{k}(N), F_{k}(N)$, and $\mathcal{R}_{k}(N)$ defined in Remark \ref{r6} are convergent. Therefore, there exist constants $\mathcal{C}_{H}, \mathcal{C}_{F},  \mathcal{C}_{\mathcal{R}}$ satisfying $H_{k}\leq \mathcal{C}_{H}, F_{k}\leq \mathcal{C}_{F}, \mathcal{R}_{k}\leq \mathcal{C}_{\mathcal{R}}$.
\end{proof}

From (\ref{51101}) and Assumption \ref{a51}, we have
\begin{align}
\|\sum^{\infty}_{s=k+1}\bar{A}^{s}F_{s}d_{s}\|\leq
\sum^{\infty}_{s=k+1}&\|\bar{A}\|^s\|F_{s}\|\|d_{s}\|
\notag\\
&\leq \mathcal{C}_{F} \bar{d}\sum^{\infty}_{s=k+1}\|\bar{A}\|^s
\leq \mathcal{C},
\end{align}
where $\rho(\bar{A})<1$ guarantees the boundedness of  $\sum^{\infty}_{s=k+1}\|\bar{A}\|^s$. Therefore, $h_{k}$ is bounded. Similarly, $f_{k}$ is bounded.

Note that when $u_{k}=-\Upsilon^{\dagger}Mx_{k}-\Upsilon^{\dagger}h_{k}$,  we obtain
\begin{align}
x_{k+1}=(A-B\Upsilon^{\dagger}M)x_{k}-B\Upsilon^{\dagger}h_{k}.\label{5408}
\end{align}
From the boundedness of $h_{k}$ and $\rho(A-B\Upsilon^{\dagger}M)<1$, we find that the system in (\ref{f2.1}) has bounded stability with $u_{k}=-\Upsilon^{\dagger}Mx_{k}-\Upsilon^{\dagger}h_{k}$.

\begin{remark}
The disturbance rejection controller presented in this paper is based on a known disturbance. In the future, we will further study the design of a disturbance rejection controller with mismatched unknown disturbances (uncertainties) based on the concepts and methods presented in this paper.
\end{remark}
\begin{remark}
The key concept of this study is to transform mismatched disturbance rejection control into an LQT problem. Accordingly, in contrast to the method in \citep{gandhi2020hybrid,2012Generalized,castillo}, where the system must be controllable, the stabilization result in Theorem \ref{t2} is obtained only under the detectable assumption.
\end{remark}

\begin{remark}\label{rem.9}
Faced with a scenario in which a disturbance is only available at the current moment, we can consider using the receding-horizon control method to design a controller to handle such a disturbance. The specific controller design is as follows:
\begin{align}
u_{s}=-\Upsilon^{-1}_{s}M_{s}x_{s}-\Upsilon^{-1}_{s}h_{s}
 \label{f3.46}
\end{align}
for $s=k,k+1,\ldots,k+T$ ($T$ is a finite positive integer), where $h_{s}$, $\Upsilon_{s}$, and $M_{s}$ satisfy the following backward equations at time $k$:

\begin{eqnarray}
\left\{
\begin{array}{lll}
h_{s}=B'(R+P_{s+1})Ed+B'f_{s+1},\\
f_{s}=A'P_{s+1}Ed+A'f_{s+1}-M'_{s}\Upsilon^{-1}_{s}h_{s}-Qr,\\
\Upsilon_{s}=B'(R+P_{s+1})B,\\
M_{s}=B'P_{s+1}A,\\
d=d_{k},\\
f_{T+k+1}=-P_{T+k+1}r.
\end{array}
\right.\label{f3.47}
\end{eqnarray}
\end{remark}

\section{Numerical Examples}\label{sec4}
In this section, four examples illustrating the effectiveness of the proposed controller are presented from different perspectives. The first example highlights the mismatched disturbance rejection effect of the proposed method in an uncontrolled system, and the second and third examples compare the disturbance rejection effect of GESOBC to that of the method proposed in this study for controlled systems with time-invariant and time-varying disturbances, respectively. The final example is the disturbance rejection application of the proposed method to an aero-engine model.
\subsection{Example A: Disturbance rejection for an uncontrollable system}
In the case where a system is stable but uncontrollable, the control law in (\ref{f3.46}) is verified to reject a mismatched disturbance.
Consider the system in (\ref{f2.1}) with the following parameters:
\begin{equation}
\begin{aligned}\label{4.3}
&{A}=\begin{bmatrix}0.96&0&0\cr 0&1&0.01\cr0&-0.02&0.99\end{bmatrix}, {B}=\begin{bmatrix}0\cr 0\cr 0.01\end{bmatrix},\notag\\
&{E}=\begin{bmatrix}0&0.01&0\end{bmatrix}', {c}_o=\begin{bmatrix}0&1&0\end{bmatrix}.\notag
\end{aligned}
\end{equation}
\begin{remark}
It is trivial to determine that the state $x^1$ in the above system is stable but not controllable, and we attempt to demonstrate the superiority of our proposed controller.
\end{remark}

The initial state of the system is $x_0 =\begin{bmatrix}1&1&0\end{bmatrix}'$ and the disturbance $d=3$ acts on the system from $k = 500$. The controller aims to remove the disturbance from the regulated state $x^2=c_ox$. In the proposed control law in (\ref{f3.46}), the horizon $T$ is set to $100$ and the terminal condition $P_{T+k+1}=O_{3\times3}$.
The reference $c_or$ is set to $0$ according to the goals defined above. The weight matrix is selected as $R=I_{3\times3}$. $P_k$, $f_k$, $h_k$, $M_k$, and $\Upsilon_{k}$ can be calculated according to (\ref{f3.47}) at every time instance $k$ to derive $u_{k}$. The simulation results for Example A are presented in Fig. \ref{fig_1}. 


\begin{figure}[htbp]
    \begin{center}
        \includegraphics[width=9.0cm,height=8cm]{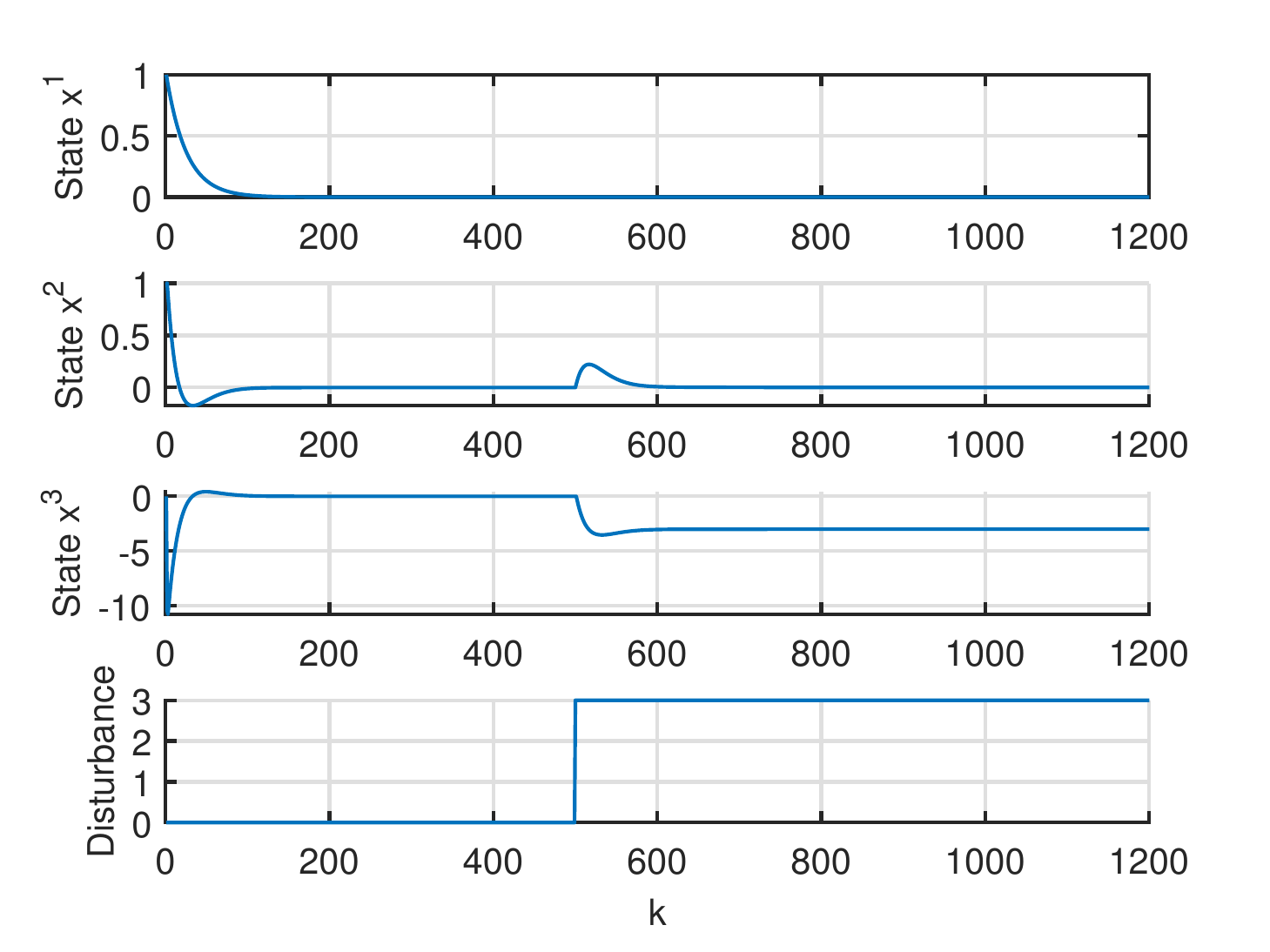}
    \end{center}
    \caption{Simulation result of Example A}\label{fig_1}
\end{figure}
In Fig. \ref{fig_1}, one can see that state $x^2$ achieves disturbance rejection quickly and stabilization is achieved for the uncontrollable state $x^1$. Therefore, the proposed method is effective for the disturbance rejection of uncontrollable systems with mismatched disturbances.

\subsection{Example B: Disturbance rejection compared to GESOBC}
This example compares GESOBC \citep{2012Generalized} to the proposed control law in (\ref{f3.46}) for constant mismatched disturbance rejection.
Consider the system in (\ref{f2.1}) with the following parameters:
\begin{equation}
\begin{aligned}\label{ff5.2}
&{A}=\begin{bmatrix}1&0.01\cr -0.02&0.99\end{bmatrix},{B}=\begin{bmatrix}0\cr 0.01\end{bmatrix},\notag\\
&{E}=\begin{bmatrix}0.01& 0\end{bmatrix}',{c}_o=\begin{bmatrix}1&0\end{bmatrix}.\notag
\end{aligned}
\end{equation}

In the proposed control law in (\ref{f3.46}), $R=I_{2\times2}$, $P_{T+k+1}=O_{2\times2}$, and the other parameters are set to be the same as in Example A.
According to \citep{2012Generalized}, the state feedback matrix $k_x$ is chosen as $[-20 -4]$ and the disturbance compensation gain of the GESOBC method is calculated as $K_d=-5$. The initial state of the system is $x_0 =\begin{bmatrix}1&0\end{bmatrix}'$ and the disturbance $d=3$ acts on the system from $k = 500$. 
Eliminating the disturbance from the regulated state $x^1={c}_ox$ is the goal of the controller. The simulation results for Example B are presented in Fig. \ref{fig_2}.


\begin{figure}[htbp]
    \begin{center}
        \includegraphics[width=9.0cm,height=7.0cm]{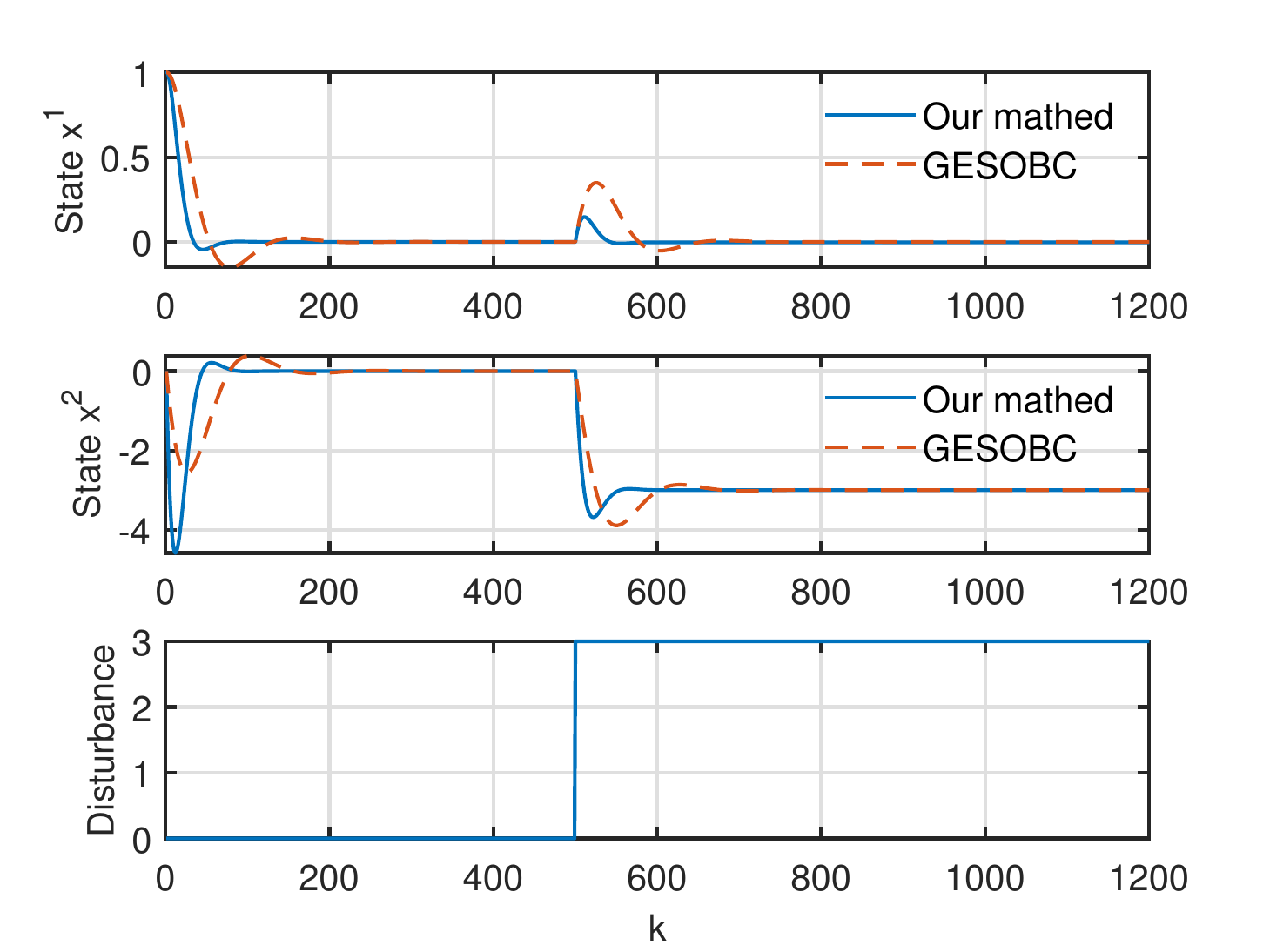}
    \end{center}
    \caption{Simulation result of Example B}\label{fig_2}
\end{figure}

In Fig. \ref{fig_2}, one can see that the proposed method is more effective for disturbance rejection than GESOBC because the proposed method rapidly eliminates the disturbance in the regulated state $x^1$.

\subsection{Example C: Disturbance rejection compared to GESOBC for a time-varying disturbance}
Having compared the disturbance elimination effects of the two methods for a time-invariant mismatched disturbance, we now compare the disturbance rejection effects of the two methods for a time-varying mismatched disturbance.

Compared to Example B, we change the disturbance to $d_k=sin(k-500)/50$ and change $c_o=\begin{bmatrix}10&0\end{bmatrix}$, while the control objective and other parameters are consistent with Example B. The disturbance acts on the system from $k = 500$. The simulation results for Example C are presented in Fig. \ref{fig_3}.
\begin{figure}[htbp]
\begin{center}
        \includegraphics[width=9.0cm,height=7cm]{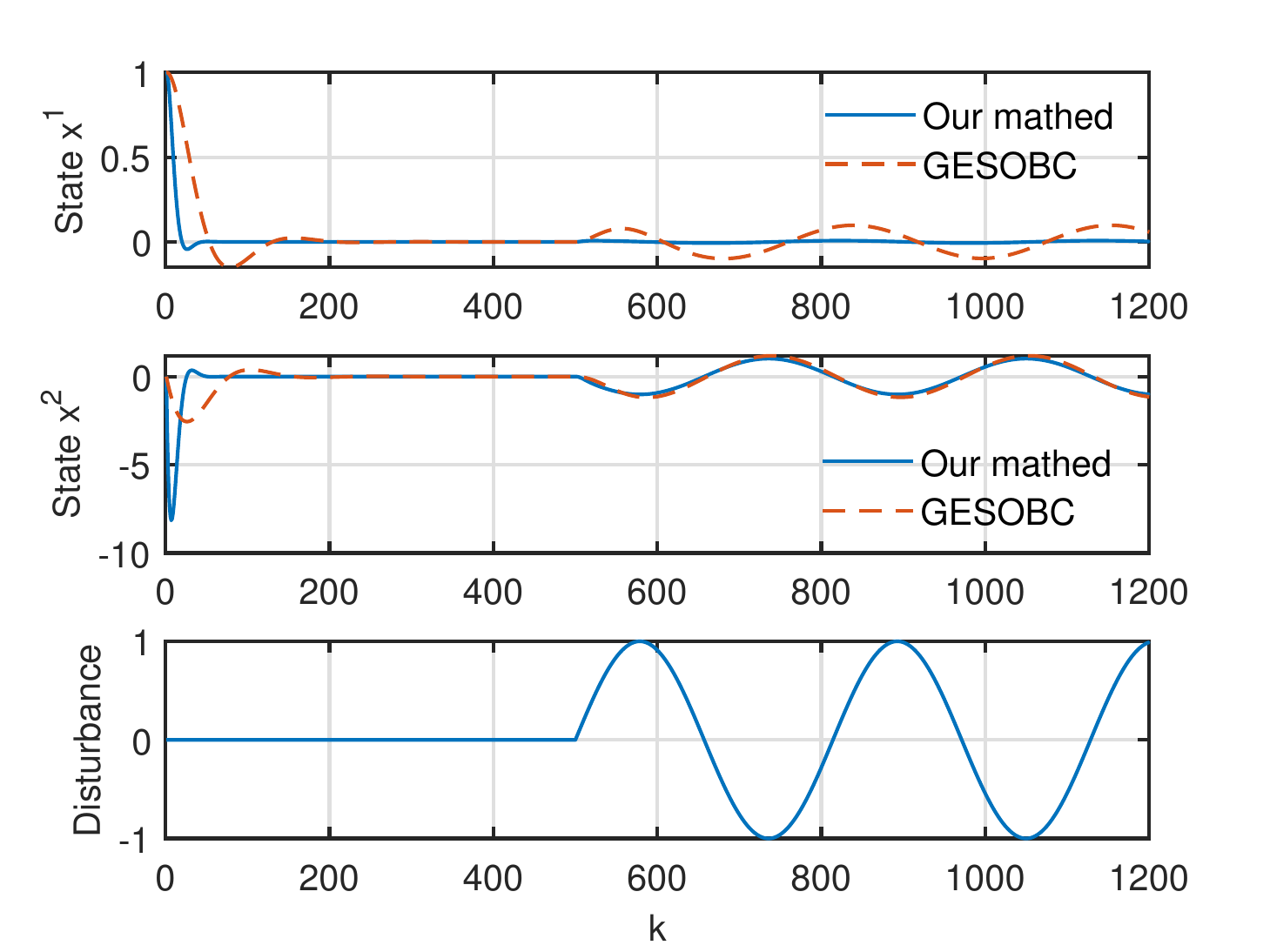}
    \end{center}
    \caption{Simulation result of Example C}\label{fig_3}
\end{figure}

As shown in Fig. \ref{fig_3}, our method is more effective than GESOBC for disturbance rejection because the proposed method almost completely removes the disturbance in the regulated state $x^1$. The results of this example demonstrate that our method can handle time-varying disturbances and the effect of disturbance rejection is clear.

\subsection{Example D: Application to an aero-engine system}\label{sec5}
In an aero-engine nozzle performance test, it is necessary to test the effects of changes in nozzle area on the engine performance parameters. It is typically required that the engine rotor speed and other states be stabilized rapidly. In this scenario, the change in nozzle area for the mass fuel flow control loop can be considered as a mismatched disturbance. The existing PID control method is mostly used for aero-engines, and it is difficult to balance disturbance rejection with optimal performance. Therefore, the proposed method was applied to the control of an aero-engine and the results were compared to the effect of PID control to demonstrate the effectiveness of the proposed method. For considering the effects of the nozzle area change of a mixed-exhaust turbofan engine system \citep{peng2016} on engine performance to verify the effectiveness of the proposed disturbance rejection method, the target system is defined as follows:
\begin{align}{}\label{f5.1}
\dot{x}(t)=Ax(t)+{B}{u}(t)+E{d}(t),
\end{align}

where $x(t)=[ \Delta \bar n_l\quad \Delta \bar n_h]'$ represents the state variables, ${y}(t)=[ \Delta \bar n_l\quad \Delta \bar n_h]'$ is the control output, ${u}(t)= \Delta w_f$ is the control input, ${d}(t)=\Delta A_8$ represents the disturbance caused by $A_8$ area changes, and $({y_o})(t)=\Delta \bar n_h$ is the regulated output. $\Delta \bar n_{l} =\bar n_{l}-\bar n_{l,0}$, $\Delta \bar n_{h} = \bar n_{h}-\bar n_{h,0}$, $\Delta \bar w_f=\bar w_f-\bar w_{f,0}$, $\Delta \bar A_8= \bar A_8-\bar A_{8,0}$. $n_l$ represents the low-pressure rotor speed (\%), $n_h$ is the high-pressure rotor speed (\%), $A_8$ represents the nozzle throat area ($m^2$), and $w_f$ is the mass fuel flow ($kg/s$). $\bar n_{l}$, $\bar n_{h}$, $\bar w_{f}$, and $\bar A_{8}$ respectively represent the values of $n_{l}$, $n_{h}$, $w_{f}$, and $A_{8}$ normalized by their maximum values. $\bar n_{l,0}$, $\bar n_{h,0}$, $\bar w_{f,0}$, and $\bar A_{8,0}$ are steady-state point values.

The working point is defined by the altitude $H=0km$ and airplane Mach number $Ma=0$, and the stable point state parameters are $n_{h,0}=11098.02rpm$, $n_{l,0}=9495.945rpm$, $w_{f,0}=0.6229kg/s$, and $A_{8.0}=A_{8min}=0.27839m^2$ with the coefficient matrix
\begin{align}\label{f5.2}
&{A}=\begin{bmatrix}-1.76&-1.34\cr2.70&-7.21\end{bmatrix},B=\begin{bmatrix}0.57\cr 0.82\end{bmatrix},\notag\\
&E=\begin{bmatrix}0.98& 2.26\end{bmatrix}',{c}_o=\begin{bmatrix}0&1\end{bmatrix}.
\end{align}
\begin{remark}
In our experiment, when the $A8$ area changed at a certain rate, the speed was more effectively stabilized by the proposed method compared to the reference. The change in the area of $A_8$ can be considered as a disturbance $d(t)$. The command to adjust the area of $A_8$ is issued by the controller and the disturbance can be considered as a known quantity.
\end{remark}

From (\ref{f5.1}) and (\ref{f5.2}), one can see that the coefficient ratios of the disturbance and control inputs into different channels of the system are different with $rank(B,E)>rank(E)$. In other words, the disturbances in the system are mismatched. The control objective is to ensure that $\bar n_h$ is stable at 77\% when the area of $A_8$ increases from $A_{8 min}$ to $A_{8 ref}$ at the maximum rate. Specifically, the area of $A_8$ within $0.5s$ increases from $0.27839m^2$ to $0.342039m^2$ with a maximum rate of $0.1273m^2/s$. The profile of $A_8$ (disturbance) is presented in Fig. \ref{fig_4}. 
To facilitate control of the digital system, the system is discretized with a sample interval $T_s=0.02s$. In the disturbance rejection control method proposed in Theorem \ref{t1}, the terminal time $T_{N}$ is set to $1s>0.5s$, so $T_{N}$ can be divided into 75 steps according to $T_s$ and the terminal condition $P_{N+1}=O$ (zero matrix). $P_k$, $f_k$, $h_k$, $M_k$, and $\Upsilon_{k}$ can be calculated using (\ref{f2.4}) to obtain $u_{k}$. The proposed control method was compared to PID control to track the reference and disturbance rejection effects, and the parameters in the PID method were set to $K_p=20$, $K_i=600$, and $K_d=0.1$ through optimal tuning.

The response curves of the aero-engine system obtained using the proposed method and PID are presented in Fig. \ref{fig_4}. 
Fig. \ref{fig_4} reveals that the proposed control method achieves a fast and smooth transition from the set point to a relatively high-pressure rotor speed $\bar n_h$ in front of the disturbance variation. Therefore, it can be concluded that the proposed method is superior to PID control for balancing disturbance rejection control and output optimization, and that excellent disturbance rejection performance can reduce the impact on the engine. 

These results demonstrate that the proposed method achieves satisfactory performance in terms of suppressing mismatched disturbances. When the controller obtains information about a disturbance, the influence of the disturbance can be quickly and completely eliminated. In this case, the proposed method is more effective than the PID algorithm.

\begin{figure}[htbp]
    \begin{center}      
        \includegraphics[width=9.0cm,height=7cm]{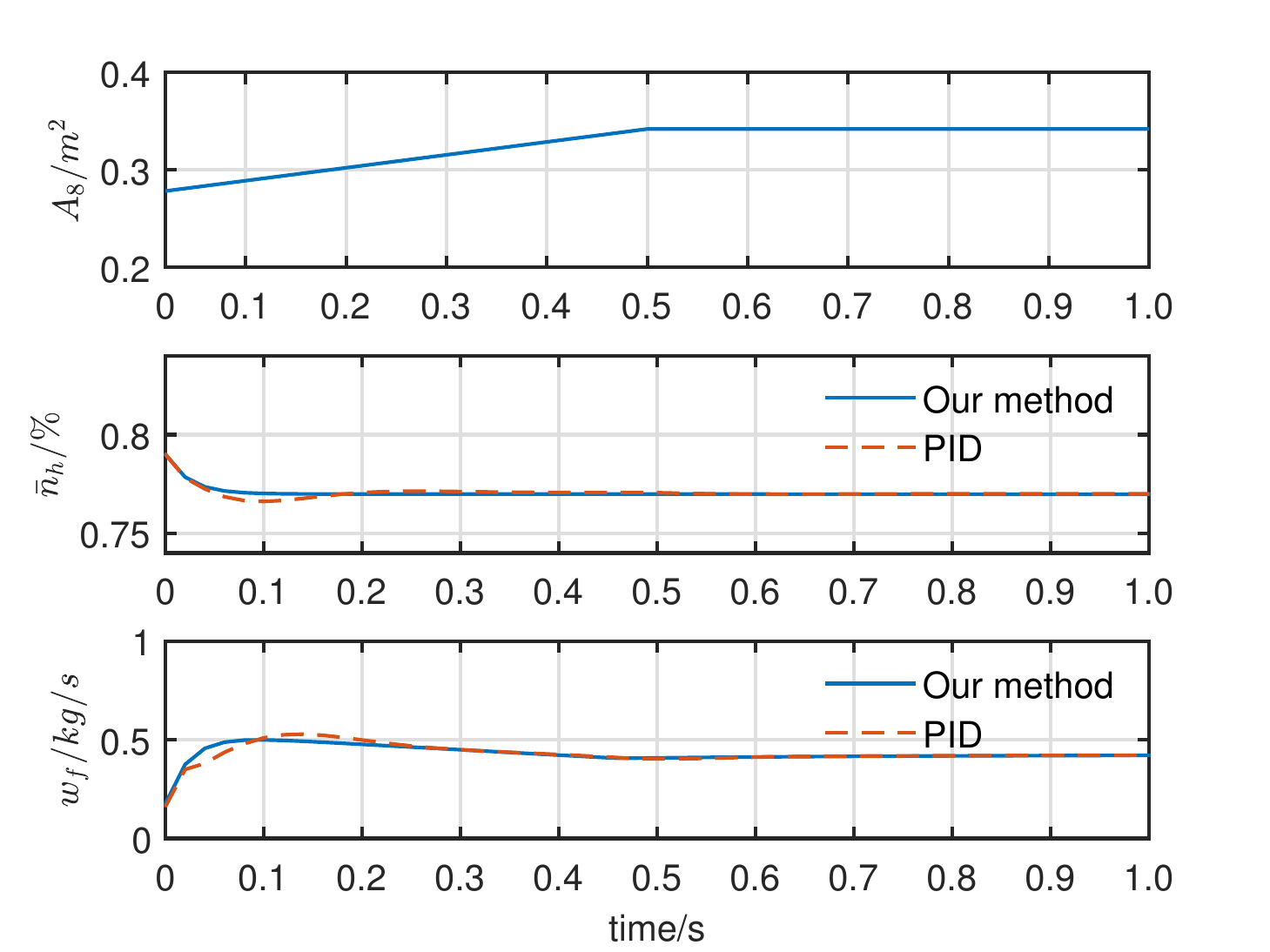}
        \caption{Simulation result of Example D}\label{fig_4}
    \end{center}
\end{figure}


\section{Conclusion}\label{sec6}
The mismatched disturbance rejection problem was transformed into an LQT problem by introducing a new quadratic performance index that considers the regulated state to track a reference and minimize the effects of disturbances. The necessary and sufficient conditions for the solvability of the problem and sufficient conditions for the disturbance rejection controller and stability of the system were given in the finite and infinite horizons, respectively. It is noteworthy that this approach weakens the assumption of controllability. Several examples were presented to illustrate the effectiveness of the proposed method. Although our proposed controller exhibited excellent performance, it requires known disturbances. In the future, we will further study the design of a mismatched unknown disturbance (uncertainty) rejection controller based on the concepts and methods presented in this paper.

\end{document}